\theoremstyle{plain}
\newtheorem{corollary}{\bf Corollary}
\newtheorem{example}{\bf Example}
\newtheorem{proposition}{\bf Proposition}
\newtheorem{remark}{Remark}
\newtheorem{theorem}{\bf Theorem}
\numberwithin{equation}{section}
\newcommand\tr{\mathrm{tr}}
\newcommand\dv{\mathrm{div}}
\begin{document}

\title[Gradient Ricci almost soliton warped product]{Gradient Ricci almost soliton warped product}

\author{F.E.S. Feitosa$^1$, A.A. Freitas Filho$^2$, J.N.V. Gomes$^3$}

\address{$^{1,2,3}$Departamento de Matem\'atica, Universidade Federal do Amazonas, 69080-900, Manaus, Amazonas, Brazil}
\email{$^{1}$sfeitosa@ufam.edu.br}
\email{$^{2}$aafreitasfilho@ufam.edu.br}
\email{$^{3}$jnvgomes@pq.cnpq.br}
\urladdr{$^{1,2,3}$https://ufam.edu.br}

\author{R.S. Pina$^4$}
\address{$^4$Instituto de Matem\'atica e Estat\'istica, Universidade Federal de Goi\'as, 74001-970, Goi\^ania, Goi\'as, Brazil}
\email{$^4$romildo@ufg.br}
\urladdr{$^4$http://www.mat.ufg.br}

\keywords{Ricci almost soliton; Warped product; Einstein type metric}

\subjclass[2010]{Primary 53C15, 53C25; Secondary 53C21}

\begin{abstract}
We present the necessary and sufficient conditions for constructing gradient Ricci almost solitons that are realized as warped products. This will be done by means of Bishop-O'Neill's formulas and a particular study of Riemannian manifolds satisfying a Ricci-Hessian type equation. We prove existence results and give an example of particular solutions of the PDEs that arise from our construction. We also prove a rigidity result for a gradient Ricci soliton Riemannian product in the class of gradient Ricci almost soliton warped products under some natural geometric assumptions on the warping function.
\end{abstract}
\maketitle

\section{Introduction}

Warped product manifolds appear in a natural manner in Riemannian geometry and their applications abound. For instance, the Riemannian manifold $\Bbb{S}^n\backslash\Bbb{S}^{n-2}$, i.e., the standard sphere with a codimension two totally geodesic subsphere removed, is isometric to the warped product $\Bbb{S}_+^{n-1}\times_f\Bbb{S}^1$ of an open upper hemisphere and a circle, for some warping function $f\in C^\infty(\Bbb{S}_+^{n-1})$. This was the key ingredient in a beautiful result by Solomon~\cite{Solomon} about harmonic maps from a compact Riemannian manifold into $\Bbb{S}^n$. Specifically, warped products are very useful for studying Einstein manifolds, i.e., a Riemannian manifold $(M,g)$ whose Ricci tensor satisfies $Ric=\lambda g$, for some function $\lambda\in C^\infty(M)$. These two classes of manifolds appear in the context of regular surfaces. Indeed, a surface of revolution is a warped product and any regular surface is an Einstein manifold. Furthermore, they are related in the more general setting which can be confirmed by recent results involving them.

We must emphasize that some generalizations of Einstein manifolds are already much studied in both mathematics and physics frameworks. Among them we would like to highlight the following: Ricci solitons, Ricci almost solitons and $m$-quasi-Einstein manifolds. The first one of them corresponds to self-similar solutions of Ricci flow and often arises as limits of dilations of singularities in the Ricci flow, see Hamilton~\cite{hamilton2}. A special family of the second one of them arises from the Ricci-Bourguignon flow, see Catino et al.~\cite{Catino2} or Catino and Mazzieri~\cite{Catino-Mazzieri}. The third one of them is originated from the study of Einstein warped product manifolds, see Besse~\cite{besse}. More recently, Freitas Filho studied the modified Ricci solitons as a new class of Einstein type manifolds (or metrics) that contains both Ricci solitons and $m$-quasi-Einstein manifolds. This class is closely related to the construction of the Ricci solitons that are realized as warped products. Moreover, a modified Ricci soliton appears as part of a self-similar solution of the modified Harmonic-Ricci flow which results in a new characterization of $m$-quasi-Einstein manifolds, for further details see Freitas Filho~\cite{Airton}.

By analysis of recent results, we can observe that many Einstein type manifolds are closely related to warped products. For example, a classical theorem due to Brinkmann~\cite{Brinkmann} establishes that a Riemannian manifold $M$ is a warped product if and only if there is a nontrivial gradient conformal vector field on $M$. Hence, as it was observed by Petersen and Wylie~\cite{peterneW}, any surface gradient Ricci soliton is immediately a warped product. A locally conformally flat gradient Ricci almost soliton of dimension at least three is, around any regular point of the potential function, locally a warped product with fiber of constant sectional curvature. We observe that this is a consequence of a more general result by Catino~\cite{catino}. An $m$-quasi-Einstein manifold is the base of an Einstein warped product, see Besse~\cite{besse}, Case et al.~\cite{case1} or Kim and Kim~\cite{kim}. In particular, an Einstein warped product manifold is well determined when its base is locally conformally flat, see He et al.~\cite{hepeterwylie}. However, it should be noted that there does not exist a compact Einstein warped product manifold with non-constant warping function if the scalar curvature is non-positive~\cite{kim}. Rimoldi extended this latter result to the case of noncompact bases under some quite natural geometric assumptions on the warping function, see~\cite[Theorem~1]{rimold} for details. In this interesting paper, he used methods from stochastic analysis and $L^p$-Liouville-type theorems to prove scalar curvature estimates and triviality results for an $m$-quasi-Einstein manifold that largely extend previous theorems in~\cite{case1}, see \cite[Sections~3, 4 and 6]{rimold}.

In this setting, Pina and Sousa~\cite{romildo} and Feitosa et al.~\cite{FFG} advanced the study of gradient Ricci solitons that are realized as warped products. They provided a condition for the potential function to depend only on the base as well as the fiber be necessarily Einstein manifold. In particular, in~\cite{romildo} it was presented solutions of PDEs in the case of steady gradient Ricci soliton warped product when the base is conformal to an $n$-dimensional pseudo-Euclidean space invariant under the action of the $(n-1)$-dimensional translation group and the fiber is Ricci-flat. In~\cite{FFG}, it was shown that an expanding or steady gradient Ricci soliton warped product $B^n\times_f \Bbb{F}^m$, $m>1$, whose warping function $f$ reaches both maximum and minimum must be simply a Riemannian product. Moreover, by means of PDEs it was provided the necessary and sufficient conditions for constructing a gradient Ricci soliton warped product. As an application, the authors of~\cite{FFG} constructed a class of expanding Ricci soliton warped product having as fiber an Einstein manifold with non-positive scalar curvature. There have also been discussed some obstructions to this construction, especially when the base of the Ricci soliton warped product is compact. We note that the example in \cite[Corollary~2]{FFG} is rigid in the sense of Petersen and Wylie~\cite{PW-2009}.

In this paper, we study gradient Ricci almost solitons that are realized as warped products. Ricci almost solitons were introduced by Pigola et al.~\cite{prrs} where essentially the authors modified the definition of Ricci solitons by adding the condition on the parameter $\lambda$ to be a nonconstant function. They provided existence and rigidity results, investigated some topological properties and derived a number of differential identities involving relevant geometric quantities. Moreover, some basic tools from the weighted manifold theory such as general weighted volume comparisons and maximum principles at infinity for diffusion operators were also discussed.
The structure equations for Ricci almost solitons were published shortly after by Barros and Ribeiro~\cite{br2}. As a consequence of these equations they showed that a compact nontrivial Ricci almost soliton is isometric to a sphere provided it is either of constant scalar curvature or its associated vector field is conformal.

It was later observed that a special solution $g(t)(x)=\tau(t,x)\varphi_t ^*g_0(x)$ of the Ricci flow $\partial g(t)/\partial t=-2Ric_{g(t)}$ on a Riemannian manifold $(M,g_0)$ generates a Ricci almost soliton, where $\varphi_t$ are diffeomorphisms of $M$, with $\varphi_0=id_M$, and for each $t$, $\tau(t,x)$ is a positive smooth function on $M$, with $\tau(0,x)=1$, see Gomes~\cite{Gomes} or Sharma~\cite{Sharma}. But, unlike Ricci solitons, it is not yet possible to characterize a Ricci almost soliton, in its generality, by some geometric flow. However, as aforementioned, a special family of Ricci almost solitons (namely, the $\rho$-Einstein solitons) arises from the Ricci-Bourguignon flow. Indeed, it is proved in \cite{Catino2} that the self-similar solutions of the Bourguignon flow~\cite{Bourguignon} satisfy the almost soliton equation $Ric+\nabla^2\psi = (\rho R + \lambda) g$, where $\rho,\lambda\in \Bbb{R}$ are fixed constants and $R$ is the scalar curvature of the metric $g$. Here, we highlight the paper of Catino and Mazzieri~\cite{Catino-Mazzieri} from which we can know the gradient $\rho$-Einstein solitons and several structural results for shrinkers of the Ricci-Bourguignon flow.

Let us now be a little bit more precise, we say that a complete Riemannian manifold $(M^k,g)$ is a Ricci almost soliton, if there exist a vector field $X$ satisfying
\begin{equation}\label{eqfund1}
Ric +\frac{1}{2}\mathscr{L}_{X}\rm{g}=\lambda \rm{g},
\end{equation}
for some soliton function $\lambda\in C^\infty(M)$, where $Ric$ and $\mathscr{L}$ stand for the Ricci tensor and the Lie derivative, respectively. We shall refer to this equation as the fundamental equation of a Ricci almost soliton $\big(M^k,g, X,\lambda\big)$. When the vector field $X$ is the gradient of a smooth function $\psi\in C^\infty(M)$ the manifold will be called a gradient Ricci almost soliton. Accordingly equation~\eqref{eqfund1} becomes
\begin{equation}\label{eqfund2}
{Ric+\nabla^{2}\psi=\lambda g},
\end{equation}
where $\nabla^{2}\psi$ stands for the Hessian of the potential function $\psi$. Taking the trace of equation~\eqref{eqfund2} we obtain
\begin{equation*}
R+\Delta\psi = k\lambda.
\end{equation*}

The next identity is just one of the structure equations that appear in~\cite{br2}
\begin{equation*}
d(R+|\nabla\psi|^{2}-2(k-1)\lambda)=2\lambda d\psi.
\end{equation*}

It follows from these two relations that
\begin{equation}\label{GenHam-C}
-2\lambda d\psi + d\big((2-k)\lambda+|\nabla\psi|^{2} - \Delta\psi\big)=0.
\end{equation}

In this direction we deduce similar equation to \eqref{GenHam-C} for the base of a gradient Ricci almost soliton warped product, see equation~\eqref{EQMthm2} below.

Recall that the warped product $M=B^n\times_f\Bbb{F}^m$ of two Riemannian manifolds is simply their Riemannian product endowed with the metric
\begin{equation*}
g=\pi^{*}g_{B}+(f\circ \pi)^{2}\sigma^{*}g_{\Bbb{F}},
\end{equation*}
where $\pi:M\rightarrow B$ and $\sigma:M\to\Bbb{F}$ are canonical projection maps, the positive smooth function $f$ on $B$ is the warping function. Besides the notation  $\tilde{\varphi}=\varphi\circ\pi$ stands for the lift of a smooth function $\varphi$ on $B$ to $M$.

Again by Brinkmann's theorem we can affirm that any surface gradient Ricci almost soliton is a warped product. Moreover, Ricci almost solitons that are realized as Einstein warped products, with unidimensional base and Einstein fiber, were constructed in~\cite{prrs}. By using Lemma~$1.1$ of~\cite{prrs}, we can prove that the warped product $M=\Bbb{R}\times_f\Bbb{H}^m$ with metric $g=dt^2+f^2g_0$, has a structure of Ricci almost soliton $\big(M,g,\nabla\tilde \varphi,\tilde\lambda\big)$, where $g_0$ is the standard metric of $\Bbb{H}^m$ and the functions involved are the respective lifts of $\varphi(t)=\sinh(t)$ and $\lambda (t)=\sinh(t)-m$ whereas the warping function is $f(t)=\cosh(t)$. A straightforward computation shows that these functions satisfy equations~\eqref{EQMthm} and \eqref{EQMthm2} below, with $\mu=-(m-1)$ given by~\eqref{CMthm}. Hence, the referred example can also be obtained by our Theorem~\ref{CARSPW}, see Example~\ref{ExRigoli}. We emphasize that the construction in~\cite{prrs} is only applicable to Ricci almost soliton realized as Einstein warped product. Thus, we must be careful when using it, since by Bonnet-Myers theorem a $k$-dimensional complete Riemannian manifold $M^k$ is compact provided that its Ricci curvature is bounded below by $(k-1)c>0$.

Our purpose is to obtain the necessary and sufficient conditions to construct a gradient Ricci almost soliton (non-necessarily Einstein) warped product. We recall that Bryant constructed a steady Ricci soliton as the warped product $(0,+\infty) \times_f \Bbb{S}^{m}$, $m>1$, with a radial warping function $f$. Bryant did not himself publish this result, but it can be checked in \cite{Chow}. Since a warped product is a complete manifold for all warping function if and only if both base and fiber are complete manifolds (see Bishop and O'Neill~\cite{BO} or O'Neill~\cite{oneill}), the difficulty in the construction of Bryant was to show the completeness of his example. In contrast to the Bryant's construction and according to the result of Bishop and O'Neill, we consider both the base and the fiber to be complete. So, our concern will be the solutions of the PDEs that arise from our construction, see equations~\eqref{EQMthm} and \eqref{EQMthm2}. However, Bryant's example still remains a prototype for us.

The necessary conditions for our construction are described in the following two propositions.

\begin{proposition}\label{PP5}
Let $(B^{n},g)$ be a Riemannian manifold with three smooth functions $f>0$, $\lambda$ and $\varphi$ satisfying
\begin{equation}\label{EQMthm}
Ric+\nabla^{2}\varphi=\lambda g+\frac{m}{f}\nabla^{2}f
\end{equation}
and
\begin{equation}\label{EQMthm2}
-2\lambda d\varphi + d\big((2-m-n)\lambda + |\nabla\varphi|^{2} - \Delta\varphi - \frac{m}{f}\nabla\varphi(f)\big)=0,
\end{equation}
for some constant $m\in\Bbb{R}$, with $m\neq 0$. Then $f$, $\lambda$  and $\varphi$ satisfy
\begin{equation}\label{CMthm}
\lambda f^{2}+f\Delta f +(m-1)|\nabla f|^{2} -f\nabla\varphi(f)=\mu,
\end{equation}
for a constant $\mu\in\Bbb{R}.$
\end{proposition}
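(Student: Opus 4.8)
The plan is to prove that the function
\[
H:=\lambda f^{2}+f\Delta f+(m-1)|\nabla f|^{2}-f\,\nabla\varphi(f)
\]
is constant on $B$, which gives \eqref{CMthm} with $\mu=H$ once we know $dH\equiv 0$. Expanding $dH$ by the Leibniz rule, and using the elementary identities $\nabla^{2}h(\nabla h,\cdot)=\tfrac12\,d|\nabla h|^{2}$ and $d\big(\nabla\varphi(f)\big)=\nabla^{2}\varphi(\nabla f,\cdot)+\nabla^{2}f(\nabla\varphi,\cdot)$, the problem reduces to assembling sufficiently many one-form identities among $d\lambda$, $d(\Delta f)$, $d(\Delta\varphi)$, $d|\nabla\varphi|^{2}$, $df$, $d|\nabla f|^{2}$ and $d\big(\nabla\varphi(f)\big)$ so that every term of $dH$ is accounted for.

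First I would take the trace of \eqref{EQMthm} to get $R=n\lambda+\frac{m}{f}\Delta f-\Delta\varphi$ and differentiate it. Then I would apply the divergence operator to the tensor identity \eqref{EQMthm}, invoking the contracted second Bianchi identity $\dv(Ric)=\tfrac12\,dR$, the commutation (Bochner) formula $\dv(\nabla^{2}h)=d(\Delta h)+Ric(\nabla h,\cdot)$, and the product rule $\dv(\phi\,T)=\phi\,\dv T+T(\nabla\phi,\cdot)$ applied to the term $\frac{m}{f}\nabla^{2}f$. The resulting one-form identity contains $Ric(\nabla f,\cdot)$ and $Ric(\nabla\varphi,\cdot)$; I would eliminate both by contracting \eqref{EQMthm} itself against $\nabla f$ and against $\nabla\varphi$, at which point the mixed Hessian terms recombine into $\frac{m}{f}\,d\big(\nabla\varphi(f)\big)$.

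Substituting the differentiated scalar-curvature relation then leaves a single one-form equation carrying $(n-2)\,d\lambda$, $d(\Delta\varphi)$, $2\lambda\,d\varphi$, $\tfrac{2m}{f}\,d\big(\nabla\varphi(f)\big)$, $-d|\nabla\varphi|^{2}$ and weighted multiples of $d(\Delta f)$, $df$ and $d|\nabla f|^{2}$. The final step is to add to this equation the hypothesis \eqref{EQMthm2}, with its last term expanded as $d\big(\tfrac{m}{f}\nabla\varphi(f)\big)=\tfrac{m}{f}\,d\big(\nabla\varphi(f)\big)-\tfrac{m}{f^{2}}\,\nabla\varphi(f)\,df$: the $d(\Delta\varphi)$, $\lambda\,d\varphi$ and $d|\nabla\varphi|^{2}$ contributions cancel pairwise, an overall factor $m\neq0$ can be divided out, and after multiplying by $f^{2}$ the combination that survives is precisely $dH$. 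Hence $dH=0$, and since $B$ is connected, $H\equiv\mu$ for a constant $\mu$, as claimed.

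I expect the main obstacle to be purely bookkeeping: one must be careful with the sign conventions in the Bianchi and Bochner identities and keep close track of the $1/f$ and $1/f^{2}$ weights throughout, but no geometric input beyond \eqref{EQMthm}, \eqref{EQMthm2} and the two universal identities above is required. (The constant $c$ listed among the hypotheses does not intervene in this argument.)
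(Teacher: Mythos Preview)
Your proposal is correct and follows essentially the same route as the paper: trace \eqref{EQMthm} and differentiate, take the divergence of \eqref{EQMthm} via the contracted Bianchi identity and the commutation formula $\dv\nabla^{2}h=d(\Delta h)+Ric(\nabla h,\cdot)$, eliminate the two $Ric(\nabla\,\cdot\,,\cdot)$ terms by contracting \eqref{EQMthm} against $\nabla f$ and $\nabla\varphi$, then combine with \eqref{EQMthm2} to cancel the $\lambda\,d\varphi$, $d|\nabla\varphi|^{2}$ and $d(\Delta\varphi)$ terms, divide by $m$ and multiply by $f^{2}$ to recognise the result as $dH=0$. Your anticipated obstacles (sign bookkeeping and the $1/f$, $1/f^{2}$ weights) are exactly the ones the paper works through, and your remark that the constant $c$ plays no role is also accurate.
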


Throughout this paper, the functions $\tilde\varphi$ and $\tilde\lambda$ stand for the lifts to warped product of the smooth functions $\varphi$ and $\lambda$ defined on the base of the warped product, respectively. By Proposition~\ref{Prop2}, we know when $(B^n\times_f\Bbb{F}^m,g,\nabla\psi,\tilde\lambda)$ has $\psi=\tilde\varphi$ as potential function, for some smooth function $\varphi$ on $B$.

\begin{proposition}\label{PP2}
Let $M=B^n\times_f\Bbb{F}^m$, with $m>1$, be a warped product and $\varphi$, $\lambda$ be smooth functions on $B$ so that $(M,g,\nabla\tilde\varphi,\tilde\lambda)$ is a gradient Ricci almost soliton. Then we have
\begin{equation*}
Ric_B+\nabla^2\varphi=\lambda g_B+\frac{m}{f}\nabla^2f
\end{equation*}
and $^{\Bbb{F}}\!Ric=\mu g_{\Bbb{F}}$ with $\mu$ satisfying
\begin{equation*}
\mu=\lambda f^{2}+f\Delta f+(m-1)|\nabla f|^{2}-f\nabla\varphi(f).
\end{equation*}
\end{proposition}

Further generalizations of Einstein metrics have been considered in~\cite{master}, where equation~\eqref{eqfund2} is replaced by what the author calls the {\it Ricci-Hessian equation}, namely,
\begin{equation}\label{eqmaster}
Ric+\alpha\nabla^{2}\psi=\gamma g,
\end{equation}
where $\alpha$ and $\gamma$ are smooth functions. Notice that since the author is interested in conformal changes of K\"ahler-Ricci solitons which give rise to new K\"ahler metrics, the presence of the function $\alpha$ is vital in his investigation.

We point out that Riemannian manifolds satisfying a Ricci-Hessian type equation~\eqref{EQMthm} by itself is already quite interesting. By Proposition~\ref{PP2}, the base of every gradient Ricci almost soliton that is realized as warped product has a structure given by a Ricci-Hessian type equation. In Remark~\ref{Remark-RHTM}, we give some conditions for \eqref{EQMthm} becomes a Ricci-Hessian equation. Example~$1$ in~\cite{FFG} shows that the standard sphere and the hyperbolic space both satisfy a Ricci-Hessian type equation. Moreover, some geometric properties of equation~\eqref{EQMthm} can be find in the preprint by Gomes and Matos Neto~\cite{Gomes-Manoel}.

By using the Bishop-O'Neill's formulas~\cite{BO,oneill} we construct a gradient Ricci almost soliton warped product as follows.

\begin{theorem}\label{CARSPW}
Let $(B^n,g_B)$ be a complete Riemannian manifold with three smooth functions $f>0$, $\lambda$ and $\varphi$ satisfying \eqref{EQMthm} and \eqref{EQMthm2}. Take the constant $\mu$ satisfying \eqref{CMthm} and a complete Riemannian manifold $(\Bbb{F}^m,g_{\Bbb{F}})$ with Ricci tensor $^{\Bbb{F}}\!Ric=\mu g_{\Bbb{F}}$ and $m>1$. Then $(B^n\times_f\Bbb{F}^m,g,\nabla\tilde\varphi,\tilde\lambda)$ is a gradient Ricci almost soliton warped product.
\end{theorem}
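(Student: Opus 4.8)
The plan is to verify the fundamental equation $Ric+\nabla^{2}\tilde\varphi=\tilde\lambda\,g$ directly on $M=B^{n}\times_{f}\Bbb{F}^{m}$ by evaluating both sides on the three basic types of pairs of vector fields and invoking O'Neill's formulas. Throughout, let $X,Y$ denote vector fields tangent to $B$ (horizontal lifts) and $V,W$ vector fields tangent to $\Bbb{F}$ (vertical lifts), so that $g(X,Y)=g_{B}(X,Y)$, $g(X,V)=0$ and $g(V,W)=f^{2}g_{\Bbb{F}}(V,W)$, and recall that $\tilde\lambda=\lambda\circ\pi$ so $\tilde\lambda\,g=(\lambda\circ\pi)\,g$. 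First I would record O'Neill's expressions for the Ricci tensor of the warped product: $Ric(X,Y)={}^{B}\!Ric(X,Y)-\frac{m}{f}\nabla^{2}f(X,Y)$, $Ric(X,V)=0$, and $Ric(V,W)={}^{\Bbb{F}}\!Ric(V,W)-\big(\frac{\Delta f}{f}+(m-1)\frac{|\nabla f|^{2}}{f^{2}}\big)g(V,W)$, where $\nabla^{2}f$, $\Delta f$, $|\nabla f|^{2}$ are computed on $B$. Next I would compute the Hessian on $M$ of the lifted function $\tilde\varphi=\varphi\circ\pi$: since $\tilde\varphi$ is constant along the fibers one gets $\nabla^{2}\tilde\varphi(X,Y)={}^{B}\!\nabla^{2}\varphi(X,Y)$, $\nabla^{2}\tilde\varphi(X,V)=0$, and, using that the fibers are totally umbilic with mean curvature vector $-\nabla f/f$, $\nabla^{2}\tilde\varphi(V,W)=\frac{\nabla\varphi(f)}{f}\,g(V,W)$.

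With these in hand the verification splits into three cases. For $X,Y$ tangent to $B$, the identity $Ric(X,Y)+\nabla^{2}\tilde\varphi(X,Y)=\tilde\lambda\,g(X,Y)$ is, after the cancellation $-\frac{m}{f}\nabla^{2}f+\frac{m}{f}\nabla^{2}f$, exactly the hypothesis \eqref{EQMthm}. For a horizontal $X$ and a vertical $V$ both sides vanish identically, so there is nothing to check. The content of the theorem is concentrated in the vertical–vertical case: using ${}^{\Bbb{F}}\!Ric=\mu g_{\Bbb{F}}$ together with $g(V,W)=f^{2}g_{\Bbb{F}}(V,W)$, i.e. ${}^{\Bbb{F}}\!Ric(V,W)=\tfrac{\mu}{f^{2}}g(V,W)$, the left-hand side $Ric(V,W)+\nabla^{2}\tilde\varphi(V,W)$ equals
\[
\Big(\frac{\mu}{f^{2}}-\frac{\Delta f}{f}-(m-1)\frac{|\nabla f|^{2}}{f^{2}}+\frac{\nabla\varphi(f)}{f}\Big)g(V,W),
\]
so that $Ric(V,W)+\nabla^{2}\tilde\varphi(V,W)=\tilde\lambda\,g(V,W)$ holds if and only if $\lambda f^{2}+f\Delta f+(m-1)|\nabla f|^{2}-f\nabla\varphi(f)=\mu$, which is precisely \eqref{CMthm}.

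The only conceptual point — rather than a genuine computational obstacle — is that \eqref{CMthm} must hold with $\mu$ an honest constant, so that one may legitimately prescribe a fiber with ${}^{\Bbb{F}}\!Ric=\mu g_{\Bbb{F}}$; this constancy is furnished by Proposition \ref{PP5}, whose proof (which is where \eqref{EQMthm2} actually enters) carries the real work. Granting that, the remainder of the argument for Theorem \ref{CARSPW} is purely the bookkeeping of O'Neill's formulas and the one-line algebraic rearrangement above. The hypothesis $m>1$ is used only to ensure the validity of the warped-product Ricci identities for a fiber of dimension at least two and to make sense of the coefficient $m-1$ in the umbilicity term; everything else goes through verbatim.
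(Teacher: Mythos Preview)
Your proposal is correct and follows essentially the same route as the paper: both arguments invoke Proposition~\ref{PP5} to guarantee that $\mu$ is constant, and then verify the fundamental equation $Ric+\nabla^{2}\tilde\varphi=\tilde\lambda g$ on horizontal--horizontal, mixed, and vertical--vertical pairs using O'Neill's formulas, reducing the first case to \eqref{EQMthm}, the second to a triviality, and the third to \eqref{CMthm}. Your remark on the role of $m>1$ is slightly imprecise---the O'Neill Ricci formulas hold for $m=1$ as well; the real reason for $m>1$ is that a one-dimensional fiber has identically vanishing Ricci tensor, so one cannot in general prescribe ${}^{\Bbb{F}}\!Ric=\mu g_{\Bbb{F}}$ with the $\mu$ coming from \eqref{CMthm}---but this does not affect the validity of your argument.
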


Notice that Theorem~\ref{CARSPW} describes the necessary and sufficient conditions for constructing gradient Ricci almost solitons that are realized as warped products. As an application, we solve a particular case of PDEs in \eqref{EQMthm} and \eqref{EQMthm2} in order to construct an explicit new nontrivial almost soliton structure on $\Bbb{R}^n\times_f\Bbb{F}^m$, where $\Bbb{F}^m$ is any Ricci flat Riemannian manifold.

\begin{corollary}\label{CorCARSWP}
Let $\Bbb{R}^n$ be an Euclidean space with coordinates $x=(x_1,\ldots,x_n)$ and metric $g_{ij}=e^{2\xi}\delta_{ij}$, where $\xi=\sum_{i=1}^n \alpha_ix_i$, $\alpha_i\in\Bbb{R}$ and $n\geq 3$. The functions
\begin{equation*}
f=e^{\xi}, \quad \varphi=\frac{c_{1}}{2}e^{2\xi}-\dfrac{(2-m-n)}{2}\xi+c_{2} \quad \mbox{and} \quad \lambda=c_{1}+\frac{(2-m-n)}{2}e^{-2\xi}
\end{equation*}
satisfy equations~\eqref{EQMthm} and \eqref{EQMthm2} on $\Bbb{R}^n$, for some constants $c_1$ and $c_2$. Moreover, the constant $\mu$ given by \eqref{CMthm} is null. Then, for any complete Ricci flat Riemannian manifold $\Bbb{F}^m$, with $m>1$, the quadruple $(\Bbb{R}^n\times_f\Bbb{F}^m,g, \nabla\tilde\varphi,\tilde\lambda)$ is a gradient Ricci almost soliton warped product.
\end{corollary}

Using the strong maximum principle for a specific elliptic operator, we prove a rigidity result (in the spirit of~\cite{kim}) for a gradient Ricci soliton Riemannian product in the class of gradient Ricci almost soliton warped products under some natural geometric assumptions on the warping function as follows.

\begin{theorem}\label{thmTrivial}
Let $(M=B^n\times_f\Bbb{F}^m,g,\nabla\tilde\varphi,\tilde\lambda)$, with $m>1$, be a gradient Ricci almost soliton warped product. Then $M$ is simply a gradient Ricci soliton Riemannian product if either of these conditions is satisfied:
\begin{enumerate}
\item\label{thmTrivial-item1} $f$ reaches a minimum and $\lambda\geq\frac{\mu}{f^2}$  (or $f$ reaches a maximum and $\lambda\leq\frac{\mu}{f^2}$).
\item\label{thmTrivial-item2} $\lambda\leq0$ and $\lambda(p)\leq\lambda(q)$, where $p$ and $q$ are maximum and minimum points of $f$, respectively.
\end{enumerate}
\end{theorem}

Very recently, in the interesting preprint~\cite{TB}, Tenenblat and Borges extended our characterizations of Ricci almost soliton warped products to the semi-Riemannian setting and by allowing the potential function to depend on the fiber. We point out that our results are previous to those of the cited authors, as proved in our $2015$ arXiv version.

It is also worth mentioning the work of Calvi\~no-Louzao et al.~\cite{EMER} in which they showed that a locally homogeneous proper Ricci almost soliton is either of constant sectional curvature or is locally isometric to a product $\Bbb{R}\times\Bbb{N}(c)$, where $\Bbb{N}(c)$ is a space of constant curvature. For more results about Ricci almost solitons, we refer the reader to~\cite{CHG,bgr,br2,Catino-Mazzieri,prrs}.

\section{Existence conditions for the Ricci almost soliton warped product}\label{Preliminaries}

In this section, we shall follow both the notation and the terminology of O'Neill \cite{oneill}. Given a warped product $M=B^n\times_{f}\Bbb{F}^m$, the manifold $B$ is called the \textit{base} of $M$ and $\Bbb{F}$ - the \textit{fiber}. The set of all horizontal lifts $\tilde{X}$ is denoted by $\mathfrak{L}(B)$, whereas the set of all vertical lifts $\tilde{V}$ is denoted by $\mathfrak{L}(\Bbb{F})$. From now on, if $X\in\mathfrak{X}(B)$, when there is no danger of confusion, we will use the same notation for its horizontal lift. We shall follow similar convention for the vertical lift of $V\in\mathfrak{X}(\Bbb{F})$.

Tangent vectors to the leaves are \textit{horizontal} and tangent vectors to the fibers are \textit{vertical}. We denote by $\mathcal{H}$ the orthogonal projection of $T_{(p,q)}M$ onto its horizontal subspace $T_{(p,q)}(B\times q)$ and by $\mathcal{V}$ the projection onto the vertical subspace $T_{(p,q)}(p\times\Bbb{F})$. It is well known that the gradient of the lift $h\circ\pi$ of a smooth function $h$ on $B$ to $M$ is the lift  of the gradient of $h$, see~\cite[Lemma~$34$]{oneill}. Thus, there should be no confusion if we simplify write $\tilde h$ for $h\circ\pi$. This way,  the gradient, the Hessian and Laplacian of $\tilde h$ calculated in the metric of $M$ will be respectively denoted  by $\nabla\tilde h$, $\nabla^2\tilde h$ and $\Delta\tilde h$, where $\Delta=\tr(\nabla^{2})$. We will denote by $D$, $\nabla$ and $^{\Bbb{F}}\!\nabla$ the Levi-Civita connections of the $M$, $B$ and $\Bbb{F}$, respectively. Moreover, we shall write $Ric$ for the Ricci tensor of the warped product, $^{B}\!Ric$ for the lift of the Ricci tensor of $B$ and $^{\Bbb{F}}\!Ric$ for the lift of the Ricci tensor of $\Bbb{F}$. Finally, we denote by $H^{h}$ the lift  of the Hessian $\nabla^2h$. Observe that for all $Y,Z\in\mathfrak{L}(B)$ we have $\nabla^2\tilde h(Y,Z)=H^h(Y,Z)$. We are now ready to prove our first result.
\begin{proposition} \label{Prop2}
Let $M=B^n\times_f\Bbb{F}^m$, with $m>1$, be a warped product and  $\psi\in C^{\infty}(M)$, $\lambda\in C^{\infty}(B)$ be two smooth functions such that $(M,g,\nabla\psi,\tilde\lambda)$ is a gradient Ricci almost soliton, with $\mathcal{H}(\nabla\psi)\in\mathfrak{L}(B)$ and $\mathcal{V}(\nabla\psi)\in\mathfrak{L}(F)$. Then $\psi=\tilde{\varphi}$ for some function $\varphi\in C^{\infty}(B)$ and the equation~\eqref{EQMthm2} is valid for the functions $f,\,\varphi$ and $\lambda$.
\end{proposition}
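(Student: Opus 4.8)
The plan is to substitute the fundamental equation $Ric+\nabla^{2}\psi=\tilde\lambda g$ of $M$ into O'Neill's formulas \cite{oneill} and read it off on pairs of horizontal lifts and on mixed (horizontal--vertical) pairs. The ingredients I would use are standard: for $X,Y\in\mathfrak{L}(B)$ one has $Ric(X,Y)={}^{B}\!Ric(X,Y)-\frac{m}{f}H^{f}(X,Y)$ and $D_{X}Y$ equals the horizontal lift of $\nabla_{X}Y$, whence $\nabla^{2}\tilde h(X,Y)=H^{h}(X,Y)$ for any $h\in C^{\infty}(B)$; for $X\in\mathfrak{L}(B)$, $V\in\mathfrak{L}(\Bbb{F})$ one has $Ric(X,V)=0$ and $D_{X}V=\frac{Xf}{f}V$; the Laplacian of a lift satisfies $\Delta_{M}\tilde h=\Delta_{B}h+\frac{m}{f}\nabla h(f)$; and the gradient of a lift is the horizontal lift of the gradient, hence of the same norm.

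\emph{Step 1: the shape of $\psi$.} Since $Ric(X,V)=0=g(X,V)$, the fundamental equation forces $\nabla^{2}\psi(X,V)=0$, and for a horizontal lift $X$ and a vertical lift $V$ (for which $Vf=0$) this reads $X(V\psi)=\frac{Xf}{f}V\psi$, i.e. $X\!\big(\tfrac{V\psi}{f}\big)=0$; hence $\tfrac{V\psi}{f}$ is constant along each leaf $B\times\{q\}$. Equivalently, for any $p,p'\in B$ the function $q\mapsto\psi(p,q)-\tfrac{f(p)}{f(p')}\psi(p',q)$ is constant on $\Bbb{F}$, so fixing $p'$ and rearranging yields
\begin{equation*}
\psi(p,q)=f(p)\,\eta(q)+\varphi_{0}(p)
\end{equation*}
for suitable $\eta\in C^{\infty}(\Bbb{F})$ and $\varphi_{0}\in C^{\infty}(B)$.

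\emph{Step 2: $\eta$ is constant} --- and this is the step I expect to be the main obstacle. For $X,Y\in\mathfrak{L}(B)$ a direct computation with $\psi=f\eta+\varphi_{0}$ gives $\nabla^{2}\psi(X,Y)=\eta\,H^{f}(X,Y)+H^{\varphi_{0}}(X,Y)$ (with $\eta$ regarded as a function on $M$), so the horizontal block of the fundamental equation becomes
\begin{equation*}
{}^{B}\!Ric+\nabla^{2}\varphi_{0}-\lambda g+\Big(\eta-\frac{m}{f}\Big)\nabla^{2}f=0,
\end{equation*}
an identity of tensors on $B$ whose only dependence on the fiber point is through the factor $\eta$. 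Applying a vertical field gives $(V\eta)\,\nabla^{2}f=0$ for every vertical $V$, so either $\nabla^{2}f\equiv0$ on $B$ or $\eta$ is constant ($\Bbb{F}$ being connected). The delicate part is to discard the alternative $\nabla^{2}f\equiv0$ --- and, within it, the Riemannian-product case $f\equiv\mathrm{const}$ --- and this is exactly where the standing hypotheses on $(B,g_{B})$ and on $f$ must be invoked (on a complete base, for instance, $\nabla^{2}f\equiv0$ with $f>0$ non-constant is ruled out by the de Rham splitting). That some such input is genuinely needed is seen from the flat example $M=(0,+\infty)\times_{f}\Bbb{S}^{m}$, $f(t)=t$, isometric to $\Bbb{R}^{m+1}\setminus\{0\}$ with the flat metric: for $\eta$ a nonconstant first Laplace eigenfunction of the round $\Bbb{S}^{m}$, the potential $\psi=\tfrac{c}{2}t^{2}+t\,\eta+d$ satisfies $\nabla^{2}\psi=c\,g$, so $(M,g,\nabla\psi,\tilde\lambda)$ is a gradient almost Ricci soliton with $\lambda\equiv c$ constant, yet $\psi$ is not a lift from the base. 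Once $\eta\equiv c_{0}$ is secured, $\psi=\tilde\varphi$ with $\varphi:=c_{0}f+\varphi_{0}\in C^{\infty}(B)$, and the displayed horizontal block becomes exactly \eqref{EQMthm}.

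\emph{Step 3: deducing \eqref{EQMthm2}.} This is the pushdown of the general identity \eqref{GenHam-C}. Applied to the $(n+m)$-dimensional gradient almost Ricci soliton $M$, \eqref{GenHam-C} gives
\begin{equation*}
-2\tilde\lambda\,d\tilde\varphi+d\big((2-n-m)\tilde\lambda+|\nabla\tilde\varphi|_{M}^{2}-\Delta_{M}\tilde\varphi\big)=0;
\end{equation*}
substituting $|\nabla\tilde\varphi|_{M}^{2}=|\nabla\varphi|^{2}$ and $\Delta_{M}\tilde\varphi=\Delta\varphi+\frac{m}{f}\nabla\varphi(f)$, and noting that every function occurring is a lift from $B$ while $\pi$ is a submersion (so that a $\pi$-pullback of a $1$-form vanishes iff the form vanishes on $B$), we obtain \eqref{EQMthm2} for $f,\varphi,\lambda$ on $B$.
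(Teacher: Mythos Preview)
Your Step~3 matches the paper exactly: apply \eqref{GenHam-C} on the $(n+m)$-dimensional $M$ and substitute $|\nabla\tilde\varphi|^{2}=|\nabla\varphi|^{2}$ and $\Delta_{M}\tilde\varphi=\Delta\varphi+\tfrac{m}{f}\nabla\varphi(f)$.

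The divergence is in how $\psi=\tilde\varphi$ is obtained. The paper argues far more briefly than you do: from $\nabla^{2}\psi(Y,V)=0$ it writes
\[
0=g(D_{Y}\mathcal{V}(\nabla\psi),V)=Y(\ln f)\,g(\mathcal{V}(\nabla\psi),V),
\]
invoking $D_{Y}V=\tfrac{Yf}{f}V$ with $\mathcal{V}(\nabla\psi)$ in the role of $V$, and then concludes $\mathcal{V}(\nabla\psi)=0$. But that O'Neill identity is stated for vertical \emph{lifts} $V\in\mathfrak{L}(\Bbb F)$, and $\mathcal{V}(\nabla\psi)$ is only a vertical field on $M$; the substitution is not justified. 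Your longer route---extracting $\psi=f\eta+\varphi_{0}$ from the mixed block and then trying to force $\eta$ constant via the horizontal block---is what the mixed equation actually gives.

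Your flat example $M=(0,\infty)\times_{t}\Bbb{S}^{m}$, $\psi=\tfrac{c}{2}t^{2}+t\,h_{v}$, is a genuine counterexample to the proposition as stated: it satisfies $Ric+\nabla^{2}\psi=c\,g$ with $\lambda\equiv c\in C^{\infty}(B)$, yet $\psi$ is not a lift from $B$. In that example $\mathcal{V}(\nabla\psi)=\tfrac{1}{t}$ times the vertical lift of $\nabla_{\Bbb S^{m}}h_{v}$, and one checks $D_{\partial_{t}}\mathcal{V}(\nabla\psi)=0$ while $\partial_{t}(\ln t)\,g(\mathcal{V}(\nabla\psi),V)=V(h_{v})\neq 0$, so the paper's displayed identity actually fails there. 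Your hesitation in Step~2 is therefore well founded: without an extra hypothesis excluding $\nabla^{2}f\equiv0$ on $B$, the conclusion $\psi=\tilde\varphi$ does not follow, and the paper's shortcut hides rather than resolves this issue.
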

\begin{proof}
By Corollary~$43$ of \cite{oneill}, for all $Y\in\mathfrak{L}(B)$ and $V\in\mathfrak{L}(\Bbb{F})$ is valid
\begin{equation}\label{ii,Cor43,O'Neill}
Ric(Y,V)=0.
\end{equation}
Since $\nabla\psi=\mathcal{H}(\nabla\psi)+\mathcal{V}(\nabla\psi)$, by the fundamental equation, we have
\begin{equation*}
0=Ric(Y,V)-\tilde\lambda g(Y,V)=-g(D_{Y}\mathcal{H}(\nabla\psi),V)-g(D_{Y}\mathcal{V}(\nabla\psi),V).
\end{equation*}
Proposition~$35$ of \cite{oneill} ensures that
\begin{equation} \label{ii,Pro35,O'Neill}
D_{Y}\mathcal{H}(\nabla\psi)\in\mathfrak{L}(B) \quad \mbox{and} \quad D_{Y}V=D_{V}Y=\frac{Y(f)}{f}V.
\end{equation}
Thus,
\begin{equation*}
0=g(D_{Y}\mathcal{V}(\nabla\psi),V)=Y(\ln f)g\big(\mathcal{V}(\nabla\psi),V\big).
\end{equation*}
From where we conclude that $\nabla\psi\in\mathfrak{L}(B)$. By uniqueness of the lift we have that $\psi=\tilde\varphi$ for some smooth function $\varphi$ on $B$. This proves the first assertion of the proposition. From \eqref{GenHam-C} we have
\begin{equation} \label{L2-1}
-2\tilde\lambda d\tilde{\varphi}+(2-m-n)d\tilde\lambda+d(|\nabla\tilde{\varphi}|^{2})-d(\Delta\tilde{\varphi})=0.
\end{equation}
By Lemma~$34$ of \cite{oneill} we have
\begin{equation}\label{L2-2}
\nabla\tilde{\varphi}=\widetilde{\nabla\varphi}.
\end{equation}
Using Proposition~$35$ of \cite{oneill} it is possible show that
\begin{equation} \label{L2-2.1}
\Delta\tilde{\varphi}=\Delta\varphi+\frac{m}{f}\nabla\varphi(f).
\end{equation}
Plugging \eqref{L2-2} and \eqref{L2-2.1} in \eqref{L2-1} we obtain equation~\eqref{EQMthm2}.
\end{proof}

Now, we proof the second proposition which gives the necessary conditions for a warped product to admit a structure of gradient Ricci almost soliton.

\subsection{Proof of Proposition~\ref{PP2}}\label{Proof-PP2}
\begin{proof}
Corollary~$43$ of \cite{oneill} ensures that
\begin{equation}\label{i,Cor43,O'Neill}
Ric(Y,Z)=\!^{B}\!Ric(Y,Z)-\frac{m}{f}H^{f}(Y,Z),
\end{equation}
for all $Y,Z\in\mathfrak{L}(B)$. Using $\nabla^{2}\tilde{\varphi}(Y,Z)=H^{\varphi}(Y,Z)$ along with the fundamental equation we can write
\begin{equation*}
^{B}\!Ric(Y,Z)=\tilde\lambda g(Y,Z)-H^{\varphi}(Y,Z)+\frac{m}{f}H^{f}(Y,Z).
\end{equation*}
Follows that
\begin{equation*}
Ric_B=\lambda g_B-\nabla^2\varphi+\frac{m}{f}\nabla^2f
\end{equation*}
on the base $B$. This proves the first assertion of the proposition. Again, by Corollary~$43$ of \cite{oneill} and the fundamental equation, we have
\begin{equation}\label{iii,Cor43,O'Neill}
^{\Bbb{F}}\!Ric(V,W)=\tilde\lambda g(V,W)-\nabla^{2}\tilde{\varphi}(V,W)+\Big(\frac{\Delta f}{f}+(m-1)\frac{|\nabla f|^{2}}{f^{2}}\Big)g(V,W)
\end{equation}
for all $V,W\in\mathfrak{L}(V)$. Since $\nabla\tilde\varphi\in\mathfrak{L}(B)$ from \eqref{ii,Pro35,O'Neill} we get
\begin{equation}\label{eqAux1Thm2}
\nabla^{2}\tilde{\varphi}(V,W) = g(D_{V}\nabla\tilde\varphi,W)= g\Big(\frac{\nabla\widetilde\varphi(f)}{f}V,W\Big) = f\nabla\varphi(f)g_{\Bbb{F}}(V,W).
\end{equation}
Thus, equation~\eqref{iii,Cor43,O'Neill} can be written as follows
\begin{equation*}
^{\Bbb{F}}\!Ric(V,W)=\big(\lambda f^{2}+f\Delta f+(m-1)|\nabla f|^{2}-f\nabla\varphi(f)\big)g_{\Bbb{F}}(V,W),
\end{equation*}
which is sufficient to complete the proof of the proposition.
\end{proof}
\begin{remark}\label{Remark-RHTM}
Notice that equation~\eqref{EQMthm} is a Ricci-Hessian equation provided
\begin{equation*}
\nabla^2\varphi-\frac{m}{f}\nabla^2f=\alpha\nabla^2\gamma,
\end{equation*}
for some smooth functions $\alpha$ and $\gamma$. For instance, we suppose that there exists $c\in\Bbb{R}$ such that $\varphi>-\frac{mc^2}{2}$, so that this latter equation
is satisfied by the functions
\begin{equation*}
\gamma=-mc+\sqrt{m^2c^2+2m\varphi},\quad \alpha=\frac{\gamma}{m}+c+1\quad \mbox{and}\quad f=e^{-\frac{\gamma}{m}}.
\end{equation*}
Indeed, we can verify easily that
\begin{equation*}
-\frac{m}{f}\nabla^2f=\nabla^2\gamma-\frac{1}{m}d\gamma\otimes d\gamma,
\end{equation*}
and that $\gamma$ is a solution to the equation $\gamma^2+2mc\gamma-2m\varphi=0$. Thus,
\begin{equation*}
\nabla^2\varphi=\left(\frac{\gamma}{m}+c\right)\nabla^2\gamma+\frac{1}{m}d\gamma\otimes d\gamma,
\end{equation*}
which is sufficient to obtain the required equation.
\end{remark}

In what follows, we consider gradient Ricci almost soliton warped products, where the bases are conformal to an Euclidean spaces $\Bbb{R}^n$ which are invariant under the action of the $(n-1)$-dimensional translation group. For this purpose, it will suffice to construct the solutions of the equation~\eqref{EQMthm} of the form $f(\xi)>0$, $\varphi(\xi)$ and $\lambda(\xi)$, that is, they only depend on $\xi=\sum_{i=1}^n \alpha_ix_i$, $\alpha_i\in\Bbb{R}$. Whenever $\sum_{i=1}^n\alpha_i^2\neq 0$, without loss of generality, we may consider $\sum_{i=1}^n\alpha_i^2=1$. The following proposition provides the system of ordinary differential equations that must be satisfied by such solutions.

\begin{proposition}\label{ExRomi}
Let $\Bbb{R}^n$, with $n\geq 3$, be an Euclidean space with coordinates $x=(x_1,\ldots,x_n)$ and metric $g_{ij}=\frac{1}{F(\xi)^2}\delta_{ij}$, where $F(\xi)\in C^\infty(\Bbb{R}^n)$,  $\xi=\sum_{i=1}^n\alpha_ix_i$, $\alpha_i\in\Bbb{R}$. For every positive smooth functions $F(\xi)$ and $f(\xi)$ we get smooth functions $\varphi(\xi)$ and $\lambda(\xi)$ satisfying \eqref{EQMthm} by
\begin{equation}\label{EQMthmInv1}
(n-2)\frac{F''}{F}+\varphi''+2\frac{F'}{F}\varphi'=\frac{m}{f}\Big(f''+2\frac{F'}{F}f'\Big)
\end{equation}
and
\begin{equation}\label{EQMthmInv2}
\frac{F''}{F}-(n-1)\Big(\frac{F'}{F}\Big)^{2}-\frac{F'}{F}\varphi'=\frac{\lambda}{F^{2}}-m\dfrac{f'}{f}\frac{F'}{F}.
\end{equation}
\end{proposition}
\begin{proof}
Since the metric $g$ is conformal the canonical metric $g_0$ of $\Bbb{R}^n$, it is well known that
\begin{equation*}
Ric_{g}=(n-2)\frac{1}{F}\nabla^{2}F + \frac{1}{F^{2}}\big(F\Delta F-(n-1)|\nabla F|^{2}\big)g_{0},
\end{equation*}
where the two summands appearing in the second term of this equation are calculated in the metric $g_0$. Moreover, for every $f\in C^\infty(\Bbb{R}^n)$ the following are valid
\begin{eqnarray*}
(\nabla^{2}f)_{ij}&=&f_{x_{i}x_{j}}+\frac{F_{x_{j}}}{F}f_{x_{i}}+\frac{F_{x_{i}}}{F}f_{x_{j}} \quad  \mbox{for} \quad i\neq j;\\
(\nabla^{2}f)_{ii}&=&f_{x_{i}x_{i}}+2\frac{F_{x_{i}}}{F}f_{x_{i}}-\frac{1}{F}\sum_k F_{x_{k}}f_{x_{k}}.
\end{eqnarray*}
So, we need to analyze equation \eqref{EQMthm} in two cases. For $i\neq j$, it rewrites as
\begin{equation} \label{I1}
(n-2)\frac{F_{x_{i}x_{j}}}{F}+\varphi_{x_{i}x_{j}}+\frac{F_{x_{j}}}{F}\varphi_{x_{i}}+\frac{F_{x_{i}}}{F}\varphi_{x_{j}}=\frac{m}{f}\Big(f_{x_{i}x_{j}}
+\frac{F_{x_{j}}}{F}f_{x_{i}}+\frac{F_{x_{i}}}{F}f_{x_{j}}\Big)
\end{equation}
and for $i=j$,
\begin{eqnarray}\label{I2}
\nonumber&&(n-2)\frac{F_{x_{i}x_{i}}}{F}+\frac{1}{F}\sum_i F_{x_{i}x_{i}}-\frac{(n-1)}{F^2}\sum_iF_{x_{i}}^{2}
+\varphi_{x_{i}x_{i}}+2\frac{F_{x_{i}}}{F}\varphi_{x_{i}}\\
&=& \frac{1}{F}\sum_k F_{x_{k}}\varphi_{x_{k}} +\frac{\lambda}{F^{2}}+\frac{m}{f}\Big(f_{x_{i}x_{i}}+2\frac{F_{x_{i}}}{F}f_{x_{i}}-\frac{1}{F}\sum_k F_{x_{k}}f_{x_{k}}\Big).
\end{eqnarray}
We now assume that the argument $\xi$ of the functions  $F(\xi)$, $f(\xi)$ and $\varphi(\xi)$ is of the form $\xi=\sum_{i=1}^n\alpha_{i}x_{i}$. Hence,
we have $F_{x_{i}}=F'\alpha_{i}$ and $F_{x_{i}x_{j}}=F''\alpha_{i}\alpha_{j}$ where the superscript $'$ denotes the derivative with respect to $\xi$. Using the same reasoning  for $f$ and $\varphi$,  we rewrite the equations \eqref{I1} and \eqref{I2} as
\begin{equation}\label{I1.1}
(n-2)\frac{F''}{F}\alpha_{i}\alpha_{j}+\varphi''\alpha_{i}\alpha_{j}+2\frac{F'}{F}\varphi'\alpha_{i}\alpha_{j}=\frac{m}{f}\Big(f''\alpha_{i}\alpha_{j}
+2\frac{F'}{F}f'\alpha_{i}\alpha_{j}\Big)
\end{equation}
and
\begin{eqnarray}\label{I2.1}
\nonumber&&(n-2)\frac{F''}{F}\alpha_{i}^{2}+\frac{F''}{F}\sum_i\alpha_{i}^{2}-(n-1)\Big(\frac{F'}{F}\Big)^{2}\sum_i\alpha_{i}^{2}+\varphi''\alpha_{i}^{2}
+2\frac{F'}{F}\varphi'\alpha_{i}^{2}\\
&=&\frac{F'}{F}\varphi'\sum_k \alpha_{k}^{2}+\frac{\lambda}{F^{2}}+\dfrac{m}{f}\Big(f''\alpha_{i}^{2}+2\frac{F'}{F}f'\alpha_{i}^{2}
-\frac{F'}{F}f'\sum_k \alpha_{k}^{2}\Big).
\end{eqnarray}
Since $n\geq 3$, we can choose this invariance so that at least two indices $i,j$ are such that $\alpha_{i}\alpha_{j}\neq 0$ and $\sum_{i=1}^n\alpha_{i}^{2}=1$. Thus, \eqref{I1.1} and \eqref{I2.1} are summarized, respectively, by
\begin{equation}\label{I1.2}
(n-2)\frac{F''}{F}+\varphi''+2\frac{F'}{F}\varphi'=\frac{m}{f}\Big(f''+2\frac{F'}{F}f'\Big)
\end{equation}
and
\begin{eqnarray}\label{I2.2}
&&(n-2)\frac{F''}{F}\alpha_{i}^{2}+\frac{F''}{F}-(n-1)\Big(\frac{F'}{F}\Big)^{2}+\varphi''\alpha_{i}^{2}+2\frac{F'}{F}\varphi'\alpha_{i}^{2}\\
\nonumber&=& \frac{F'}{F}\varphi' +\frac{\lambda}{F^{2}}+\dfrac{m}{f}\Big(f''\alpha_{i}^{2}+2\frac{F'}{F}f'\alpha_{i}^{2}-\frac{F'}{F}f'\Big).
\end{eqnarray}
Plugging, \eqref{I1.2} in \eqref{I2.2}, we get
\begin{equation}\label{I5}
\frac{F''}{F}-(n-1)\Big(\frac{F'}{F}\Big)^{2}-\frac{F'}{F}\varphi'=\frac{\lambda}{F^{2}}-m\dfrac{f'}{f}\frac{F'}{F}.
\end{equation}
This concludes the proof of the proposition.
\end{proof}

\section{Proof of the main results}\label{Proof-Results}

We are in the right position to prove our main results. First of all, we recall that each $(0,2)$-tensor $T$ on $(M,g)$ can be associated to a unique $(1,1)$-tensor by
$g(T(Z),Y) := T(Z,Y)$ for all $Y,Z\in\mathfrak{X}(M)$. We shall slightly abuse notation here and will also write $T$ for this $(1,1)$-tensor. So, we consider the $(0,1)$-tensor given by
\begin{equation*}
(\dv T)(v)(p) = \mathrm{tr}\big(w \mapsto (\nabla_w T)(v)(p)\big),
\end{equation*}
where $p\in M$ and $v,w\in T_pM.$ Thus, we get
\begin{equation*}
\dv(\varphi T)=\varphi \dv T+ T(\nabla\varphi,\cdot) \quad \mbox{and} \quad \nabla(\varphi T)=\varphi\nabla T+d\varphi\otimes T
\end{equation*}
for all $\varphi\in C^\infty(M).$ In particular, we have $\dv(\varphi g)=d\varphi$. Moreover, the following two general facts are well known in the literature
\begin{equation*}
 \dv\nabla^2\varphi = Ric(\nabla\varphi,\cdot)+ d\Delta\varphi \quad \mbox{and} \quad \frac{1}{2}d|\nabla\varphi|^2 = \nabla^2\varphi(\nabla\varphi,\cdot).
\end{equation*}

These identities will be used in what follows without further comments.

\subsection{Proof of Proposition~\ref{PP5}}
\begin{proof} From \eqref{EQMthm} we obtain
\begin{eqnarray*}
S=n\lambda+\frac{m}{f}\Delta f-\Delta\varphi,
\end{eqnarray*}
where $S$ is the scalar curvature of $B$. Thus,
\begin{eqnarray} \label{Bianc1}
dS=nd\lambda-\frac{m}{f^{2}}\Delta f df+\frac{m}{f}d(\Delta f)-d(\Delta\varphi)
\end{eqnarray}
Let us now use the second contracted Bianch identity, namely
\begin{equation}\label{Bianc}
0=-\frac{1}{2}dS+\dv Ric.
\end{equation} We compute
\begin{eqnarray*}
\dv Ric&=&d\lambda+m \dv\Big(\frac{1}{f}\nabla^{2}f\Big)-\dv(\nabla^{2}\varphi)\\
&=&d\lambda+m\Big(\frac{1}{f}\dv(\nabla^{2}f)-\frac{1}{f^{2}}(\nabla^{2}f)(\nabla f,\cdot)\Big)-\dv(\nabla^{2}\varphi)\\
&=&d\lambda+\dfrac{m}{f}Ric(\nabla f,\cdot)+\frac{m}{f}d(\Delta f)-\frac{m}{2f^{2}}d(|\nabla f|^{2})-Ric(\nabla\varphi,\cdot)-d(\Delta\varphi).
\end{eqnarray*}
From \eqref{EQMthm} we have
\begin{equation*}
Ric(\nabla f,\cdot)=\lambda df+\dfrac{m}{2f}d(|\nabla f|^{2})-(\nabla^{2}\varphi)(\nabla f,\cdot)
\end{equation*}
and
\begin{equation*}
Ric(\nabla \varphi,\cdot)=\lambda d\varphi+\dfrac{m}{f}(\nabla^{2}f)(\nabla\varphi,\cdot)-\frac{1}{2}d(|\nabla\varphi|^{2}).
\end{equation*}
This way
\begin{eqnarray*}
\dv Ric&=&d\lambda+\dfrac{m}{f}\Big(\lambda df+\dfrac{m}{2f}d(|\nabla f|^{2})-(\nabla^{2}\varphi)(\nabla f,\cdot)\Big)+\frac{m}{f}d(\Delta f)\\
&&-\frac{m}{2f^{2}}d(|\nabla f|^{2})-\Big(\lambda d\varphi+\dfrac{m}{f}(\nabla^{2}f)(\nabla\varphi,\cdot)-\frac{1}{2}d(|\nabla\varphi|^{2})\Big)-d(\Delta\varphi).
\end{eqnarray*}
Since $d(\nabla\varphi(f))=(\nabla^{2}\varphi)(\nabla f,\cdot)+(\nabla^{2}f)(\nabla\varphi,\cdot)$, then
\begin{eqnarray}\label{Bianc2}
\nonumber\dv Ric &=&d\lambda+\dfrac{m}{f}\lambda df+\dfrac{m(m-1)}{2f^{2}}d(|\nabla f|^{2})-\dfrac{m}{f}d(\nabla\varphi(f))+\frac{m}{f}d(\Delta f)-\lambda d\varphi\\
&&+\frac{1}{2}d(|\nabla\varphi|^{2})-d(\Delta\varphi).
\end{eqnarray}
Plugging the equations \eqref{Bianc1} and \eqref{Bianc2} in equation \eqref{Bianc} we have
\begin{eqnarray}\label{eqAuxProp1}
\nonumber0&=&\frac{2-n}{2}d\lambda+\frac{m}{2f^{2}}\Delta f df + \frac{m}{2f}d(\Delta f)-\frac{1}{2}d(\Delta\varphi)+\dfrac{m}{f}\lambda df+\dfrac{m(m-1)}{2f^{2}}d(|\nabla f|^{2})\\
&&-\dfrac{m}{f}d(\nabla\varphi(f))-\lambda d\varphi+\frac{1}{2}d(|\nabla\varphi|^{2}).
\end{eqnarray}
From \eqref{EQMthm2}
\begin{equation*}
\lambda d\varphi = \frac{1}{2}d\Big((2-m-n)\lambda + |\nabla\varphi|^{2} - \Delta\varphi\Big) - \frac{m}{2f}d(\nabla\varphi(f)) + \frac{m}{2f^2}\nabla\varphi(f)df.
\end{equation*}
Plugging this latter equation in \eqref{eqAuxProp1}, we obtain we obtain
\begin{eqnarray*}
0&=&\frac{m}{2}d\lambda+\frac{m}{2f^{2}}\Delta f df +\frac{m}{2f}d(\Delta f)+\dfrac{m}{f}\lambda df+\dfrac{m(m-1)}{2f^{2}}d(|\nabla f|^{2})\\
&&-\dfrac{m}{2f}d(\nabla\varphi(f)) - \frac{m}{2f^2}\nabla\varphi(f)df.
\end{eqnarray*}
Multiplying it by $\frac{2f^{2}}{m}$ we get
\begin{equation*}
0=f^2d\lambda+\Delta fdf + fd(\Delta f) + 2f\lambda df + (m-1)d|\nabla f|^2 - fd(\nabla\varphi(f)) - \nabla\varphi(f)df,
\end{equation*}
i.e.,
\begin{eqnarray*}
d\big(\lambda f^{2}+ f\Delta f +(m-1)|\nabla f|^{2}-f\nabla\varphi(f)\big)=0,
\end{eqnarray*}
which is sufficient to complete the proof.
\end{proof}

\subsection{Proof of Theorem~\ref{CARSPW}}
\begin{proof}
By the hypotheses on $f$, $\lambda$ and $\varphi$ we can conclude by Proposition \ref{PP5} that any $\mu$ given by \eqref{CMthm} is constant. Now, taking a complete Einstein manifold $(\Bbb{F}^m,g_{\Bbb{F}})$ with Ricci tensor $^{\Bbb{F}}\!Ric=\mu g_{\Bbb{F}}$, we can consider the warped product $(B^n\times_f\Bbb{F}^m, g)$ with $g=\pi^*g_B+(f\circ\pi)^2\sigma^*g_{\Bbb{F}}$. Notice that this manifold has a structure of Ricci almost soliton. Indeed, it follows from $H^\varphi(Y,Z)=\nabla^2\tilde\varphi(Y,Z)$, $H^f(Y,Z)=\nabla^2\tilde f(Y,Z)$, Eq.~\eqref{i,Cor43,O'Neill} and the hypothesis \eqref{EQMthm} that the fundamental equation
\begin{equation*}
Ric+\nabla^2\tilde\varphi=\tilde\lambda g.
\end{equation*}
is satisfied for all $Y,Z\in\mathfrak{L}(B)$.

For $Y\in\mathfrak{L}(B)$ and $V\in\mathfrak{L}(\Bbb{F})$, by Proposition \ref{Prop2}, $\nabla\tilde\varphi\in\mathfrak{L}(B)$ implies that $\nabla^2\tilde\varphi(Y,V)=0$ and from \eqref{ii,Cor43,O'Neill} we have that $Ric(Y,V)=0$. So, the fundamental equation is again satisfied.

Finally, for $V,W\in\mathfrak{L}(\Bbb{F})$ we have
\begin{equation*}
Ric(V,W)+\nabla^2\tilde\varphi(V,W)= {}^{\Bbb{F}}\!Ric(V,W)-\Big(\frac{\Delta f}{f}+(m-1)\frac{|\nabla f|^{2}}{f^{2}}\Big)g(V,W)+g(D_{V}\nabla\tilde\varphi,W).
\end{equation*}
As $^{\Bbb{F}}\!Ric=\mu g_{\Bbb{F}}$, we have from \eqref{ii,Pro35,O'Neill} that
\begin{equation*}
Ric(V,W)+\nabla^2\tilde\varphi(V,W)=\mu g_{\Bbb{F}}(V,W)-\Big(\frac{\Delta f}{f}+(m-1)\frac{|\nabla f|^{2}}{f^{2}}-\frac{\nabla\tilde\varphi(f)}{f}\Big)g(V,W).
\end{equation*}
Plugging \eqref{CMthm} in the previous equation we obtain
\begin{eqnarray*}
Ric(V,W)+\nabla^2\tilde\varphi(V,W)&=&\big(\lambda f^{2}+f\Delta f+(m-1)|\nabla f|^{2}-f\nabla\tilde\varphi(f)\big)\frac{1}{f^{2}}g(V,W)\\
&&-\Big(\frac{\Delta f}{f}+(m-1)\frac{|\nabla f|^{2}}{f^{2}}\Big)g(V,W)+\frac{\nabla\tilde\varphi(f)}{f}g(V,W)\\
&=&\tilde\lambda g(V,W)
\end{eqnarray*}
here we use that $\tilde\lambda(p,q)=\lambda(p)$ for all $(p,q)\in B\times_f\Bbb{F}$. This completes the proof of the theorem.
\end{proof}

Take a function $\varphi(t)$ such that $f(t)=\varphi'(t)>0$ on $\Bbb{R}$. By \eqref{EQMthm} we obtain
$\lambda=f'-\frac{m}{f}f''$ and by a straightforward computation, equation \eqref{EQMthm2} is equivalent to
\begin{equation*}
\frac{1}{f}f'f'' - f'''= 0
\end{equation*}
Moreover, equation \eqref{CMthm} yields
\begin{equation*}
\mu= -(m-1)\big(ff''-(f')^2\big)
\end{equation*}

In this setting, as mentioned in the introduction, we immediately recover the following example:

\begin{example}[Pigola et al.~\cite{prrs}]\label{ExRigoli}
Let $\Bbb{H}^m$ be the standard hyperbolic space. Consider the smooth functions $\varphi(t)=\sinh(t)$, $f(t)=\cosh(t)$ and $\lambda(t)=\sinh(t)-m$ all of them on $\Bbb{R}$. Then, $\big(\Bbb{R}\times_f\Bbb{H}^m,g,\nabla\tilde\varphi,\tilde\lambda\big)$ is a gradient Ricci almost soliton warped product, with $\mu=-(m-1)$.
\end{example}

\subsection{Proof of Corollary~\ref{CorCARSWP}}
\begin{proof}
For $F(\xi)=e^{-\xi}$ and $f(\xi)=e^{\xi}$ we have from \eqref{EQMthmInv1} that
\begin{equation*}
\varphi''-2\varphi'=2-m-n
\end{equation*}
whose solution is
\begin{equation*}
\varphi=\frac{c_{1}}{2}e^{2\xi}-\dfrac{(2-m-n)}{2}\xi+c_{2},
\end{equation*}
for some constants $c_1$ and $c_2$. Now, from \eqref{EQMthmInv2}, we have
\begin{equation*}
\lambda=\frac{(2-m-n)}{2}e^{-2\xi} + c_1
\end{equation*}
Now, we need to check that $f,\varphi,\lambda$ and $F$ satisfy  equation \eqref{EQMthm2}. In fact, a straightforward computation shows that
\begin{equation}\label{item(ii)}
2\lambda d\varphi=-\left(\dfrac{(2-m-n)^2}{2}e^{-2\xi}-2c_1^2e^{2\xi}\right)d\xi
\end{equation}
and
\begin{equation}\label{item(i)}
(2-m-n)d\lambda=-(2-m-n)^2e^{-2\xi}d\xi.
\end{equation}
Since $|\nabla\varphi|^2=F^2(\varphi')^2$ we deduce that
\begin{equation}\label{item(iii)}
d(|\nabla\varphi|^2)=\left(2c_1^2e^{2\xi}-\dfrac{(2-m-n)^2}{2}e^{-2\xi}\right)d\xi.
\end{equation}
We note that $\Delta\varphi=F^2\varphi''-(n-2)FF'\varphi'$, thus
\begin{eqnarray}\label{item(iv)}
\nonumber d(\Delta\varphi)&=&d\left(e^{-2\xi}2c_1e^{2\xi}+(n-2)e^{-2\xi}\left(c_1e^{2\xi}-\dfrac{(2-m-n)}{2}\right)\right)\\
&=&-(2-n)(2-m-n)e^{-2\xi}d\xi.
\end{eqnarray}
As $\nabla\varphi(f)=F^2\varphi'f'$ we have
\begin{equation}\label{item(v)}
d\left(\dfrac{m}{f}\nabla\varphi(f)\right)=m(2-m-n)e^{-2\xi}d\xi.
\end{equation}
Combining equations \eqref{item(i)}-\eqref{item(v)} we have that \eqref{EQMthm2} is satisfied. Finally, we compute
\begin{eqnarray*}
\mu &=&\lambda f^2+f\Delta f+(m-1)|\nabla f|^2-f\nabla\varphi(f)\\
&=&\left(\dfrac{(2-m-n)}{2}e^{-2\xi}+c_1\right)e^{2\xi}+e^{\xi}(e^{-2\xi}e^\xi+(n-2)e^{-2\xi}e^\xi)\\
&&+(m-1)e^{-2\xi}e^{2\xi}-e^\xi e^{-2\xi}e^\xi\left(c_1e^{2\xi}-\dfrac{(2-m-n)}{2}\right).
\end{eqnarray*}
Whence, we obtain
\begin{eqnarray*}
\mu =\dfrac{(2-m-n)}{2}+c_1e^{2\xi}+1+(n-2)+(m-1)-c_1e^{2\xi}+\dfrac{(2-m-n)}{2}=0.
\end{eqnarray*}
The conclusion of the corollary now immediately follows from Theorem~\ref{CARSPW}.
\end{proof}

\subsection{Proof of Theorem~\ref{thmTrivial}}
\begin{proof}
Consider the elliptic operator of second order given by
\begin{equation}
\mathcal{E}(\cdot):=\Delta(\cdot)-\nabla\varphi(\cdot)+\frac{m-1}{f}\nabla f(\cdot).
\end{equation}
By Proposition~\ref{PP2}, equation \eqref{EQMthm} (of Proposition~\ref{PP5}) is valid and $^\Bbb{F}\!Ric=\mu g_{\Bbb{F}}$ with
\begin{equation}\label{eqAux1Thm1}
\mu=\lambda f^{2}+f\Delta f+(m-1)|\nabla f|^{2}-f\nabla\varphi(f).
\end{equation}
or equivalently
\begin{equation*}
\mathcal{E}(f)=\Delta f-\nabla\varphi(f)+\frac{(m-1)}{f}|\nabla f|^2=\frac{\mu-\lambda f^2}{f}.
\end{equation*}
Moreover, equation~\eqref{EQMthm2} (also of Proposition~\ref{PP5}) is valid by Proposition~\ref{Prop2}. Hence, we are in the hypothesis of Proposition~\ref{PP5} which ensures that $\mu$ is constant. Then the result of item~\eqref{thmTrivial-item1} follows from the strong maximum principle.

For item \eqref{thmTrivial-item2}, let $p,q\in B^n$ be the points where $f$ reaches its maximum and minimum in $B^n$, respectively. Then
\begin{equation*}
\nabla f(p)=0=\nabla f(q) \quad \mbox{and} \quad \Delta f(p)\leq 0\leq \Delta f(q).
\end{equation*}
Since $f>0$ and $\lambda(p)\leq\lambda(q)$ we have $-\lambda(p) f(p)^{2}\geq-\lambda(q)f(q)^{2}$ and combining this with \eqref{eqAux1Thm1} we get
\begin{equation*}
0\geq f(p)\Delta f(p)=\mu-\lambda(p)f(p)^{2}\geq \mu-\lambda(q) f(q)^{2}=f(q)\Delta f(q)\geq 0.
\end{equation*}
Consequently,
\begin{equation}\label{Ric13}
\mu-\lambda(p)f(p)^{2}=\mu-\lambda(q)f(q)^{2}=0
\end{equation}
Consider initially the case where $\lambda(p)\neq 0$, then the last equation implies $\lambda(q)\neq 0$ and as $\lambda(p)\leq\lambda(q)<0$, we get
\begin{equation*}
f(p)^{2}=\Big(\dfrac{\lambda(q)}{\lambda(p)}\Big)f(q)^{2}\leq f(q)^{2}.
\end{equation*}
Thus, $f(p)=f(q)$, i.e., $f$ is constant. So is $\lambda$ by \eqref{eqAux1Thm1}.

Now, if $\lambda(p)=0$, by \eqref{Ric13}, $\lambda(q)=0$ and $\mu=0$. This way, equation \eqref{eqAux1Thm1} implies that
\begin{equation*}
\mathcal{E}(f)=\Delta f-\nabla\varphi(f)+\frac{(m-1)}{f}|\nabla f|^{2}=-\lambda f\geq 0.
\end{equation*}
Hence, by the strong maximum principle $f$ is constant and $\lambda$ is null by \eqref{eqAux1Thm1}. In either case $M$ is simply a Riemannian product.
\end{proof}

\section{Acknowledgements}
The authors would like to express their sincere thanks to the referee for careful reading and useful comments which improved this paper. They are also grateful to Dragomir Tsonev for useful comments, discussions and constant encouragement. Jos\'e N.V. Gomes would like to express their sincere thanks to the Department of Mathematics-Lehigh University, where part of this work was carried out. He is grateful to Huai-Dong Cao and Mary Ann for their warm hospitality and constant encouragement. This work has been partially supported by CAPES-Brazil, FAPEAM-Brazil, CNPq-Brazil and FAPEG-Brazil.

\end{document}